\documentclass[12pt]{article}
\usepackage{graphicx}
\usepackage[ruled,vlined]{algorithm2e}
\usepackage{appendix}
\usepackage{amsmath}
\usepackage{amssymb}
\usepackage{amsthm}
\usepackage{multirow}
\usepackage{color}
\usepackage{longtable}
\usepackage{array}
\usepackage{url}
\usepackage{booktabs}
\usepackage{float}
\usepackage{mathtools}
\usepackage{tikz}
\usepackage{caption}
\usepackage{diagbox}
\usepackage{enumerate}
\usepackage{mathrsfs}
\usepackage[bookmarks=true]{hyperref}

\oddsidemargin  0pt \evensidemargin 0pt \marginparwidth 40pt
\marginparsep 10pt \topmargin -10pt \headsep 10pt \textheight
8.7in \textwidth 6.7in \textheight 7.8375in

\newtheorem{theorem}{Theorem}[section]

\newtheorem{lemma}[theorem]{Lemma}

\newtheorem{problem}[theorem]{Problem}
\newtheorem{conjecture}[theorem]{Conjecture}

\theoremstyle{definition}



\title{On color isomorphic subdivisions}
\author{Zixiang Xu$^{\text{a}}$\thanks{e-mail: zxxu8023@qq.com.} and Gennian Ge$^{\text{a,}}$\thanks{e-mail: gnge@zju.edu.cn. Research supported by the National Natural Science Foundation of China under Grant No. 11971325, National Key Research and Development Program of China under Grant  No. 2018YFA0704703,  and Beijing Scholars Program.}\\
\footnotesize $^{\text{a}}$ School of Mathematical Sciences, Capital Normal University, Beijing 100048, China.\\
}

\begin{document}

\date{}

\maketitle

\begin{abstract}
  Given a graph $H$ and an integer $k\geqslant 2$, let $f_{k}(n,H)$ be the smallest number of colors $C$ such that there exists a proper edge-coloring of the complete graph $K_{n}$ with $C$ colors containing no $k$ vertex-disjoint color isomorphic copies
of $H$. In this paper, we prove that $f_{2}(n,H_{t})=\Omega(n^{1+\frac{1}{2t-3}})$ where $H_{t}$ is the $1$-subdivision of the complete graph $K_{t}$. This answers a question of Conlon and Tyomkyn (arXiv: 2002.00921).
\end{abstract}

\medskip

\noindent {{\it Key words and phrases\/}: Color isomorphic, subdivision, edge-coloring}

\smallskip

\noindent {{\it AMS subject classifications\/}: 05C15, 05C35.}

\section{Introduction}\label{sec:Introduction}
 Recently, Conlon and Tyomkyn~\cite{Conlon2020} initiated the study of a new problem on extremal graph theory, which aims to find two or more vertex-disjoint color isomorphic copies of some given graph in proper edge-colorings of complete graphs. Formally, we say that two vertex-disjoint copies of a graph $H$ in a coloring of $K_{n}$ are color isomorphic if there exists an isomorphism between them preserving the colors. For an integer $k\geqslant 2$ and a graph $H$, let $f_{k}(n,H)$ be the smallest number of colors $C$ such that there exists a proper edge-coloring of the complete graph $K_{n}$ with $C$ colors containing no $k$ vertex-disjoint color isomorphic copies of $H$. Obviously we have $n-1\leqslant f_{k}(n,H)\leqslant\binom{n}{2}$ since the coloring of $K_{n}$ is proper. Conlon and Tyomkyn~\cite{Conlon2020} first verified that finding $f_{k}(n,H)$ is indeed an extremal problem. Hence one may ask the following question.
  \begin{problem}\label{problem:MAIN}
Given a graph $H$ and an integer $k\geqslant 2$, determine the order of growth of $f_{k}(n,H)$ as $n\rightarrow\infty.$
\end{problem}
   In \cite{Conlon2020}, Conlon and Tyomkyn showed various general results about the function $f_{k}(n,H)$, such as the following upper bounds.
\begin{theorem}[\cite{Conlon2020}]\label{thm:SomeknownResults} The followings hold.\medskip

 \noindent\emph{(i)} For any graph $H$ with $v$ vertices and $e$ edges,
  \begin{equation*}
    f_{k}(n,H)=O(\max\{n,n^{\frac{kv-2}{(k-1)e}}\}).
    \end{equation*}

    \noindent\emph{(ii)} For every graph $H$ containing a cycle, there exists $k=k(H)$ such that
\begin{equation*}
  f_{k}(n,H)=\Theta(n).
\end{equation*}
\end{theorem}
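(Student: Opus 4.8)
The plan is to prove both statements by exhibiting suitable proper colourings, treating (i) probabilistically and (ii) by a reduction to cycles followed by an explicit construction.

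For part (i), I would take a random proper edge-colouring of $K_n$ with $C$ colours and apply the Lovász Local Lemma. For each configuration $B$ consisting of $k$ pairwise vertex-disjoint copies of $H$ together with colour-preserving isomorphisms identifying them -- spanning $kv$ vertices and $ke$ edges -- let $A_B$ be the event that $B$ is colour-isomorphic. Fixing the colours on the first copy, each of the remaining $k-1$ copies must reproduce the same pattern on its $e$ edges, so $\Pr[A_B]$ is of order $C^{-(k-1)e}$. A plain union bound here would only force $C\gtrsim n^{kv/((k-1)e)}$, which is weaker; the improvement comes from the dependency structure. The events $A_B$ and $A_{B'}$ are independent unless $B$ and $B'$ share an edge, and a shared edge freezes two of the $kv$ vertices of $B'$, leaving at most $kv-2$ free. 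Hence each $A_B$ depends on at most $O(n^{kv-2})$ others, and the symmetric Local Lemma condition $C^{-(k-1)e}\,n^{kv-2}=O(1)$ is met once $C=\Theta(n^{(kv-2)/((k-1)e)})$. Combined with the trivial requirement $C\geq n-1$ for properness this gives $f_k(n,H)=O(\max\{n,n^{(kv-2)/((k-1)e)}\})$, the exponent $-2$ being exactly the two vertices pinned down by a shared edge.

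The main obstacle in (i) is that a uniform random proper colouring has no independent underlying variables, so the Local Lemma does not apply verbatim. I would instead colour the edges independently and uniformly from $[C]$ and enlarge the family of bad events by the ``impropriety'' events that two adjacent edges receive the same colour; avoiding all events simultaneously then produces a colouring that is both proper and free of colour-isomorphic $k$-tuples. An impropriety event has probability $1/C$ and dependency degree $O(n)$, so to keep its Local Lemma contribution $O(1)$ I would take $C=\Lambda n$ for a large constant $\Lambda$, which is where the boundary bookkeeping near $C\approx n$ needs care. I note that this already proves (ii) whenever $e>v$: for large $k$ the exponent $(kv-2)/((k-1)e)$ drops below $1$, so $C=\Theta(n)$ suffices, and the only genuinely new case in (ii) is $e=v$, that is, cycles.

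For part (ii) the lower bound $f_k(n,H)\geq n-1$ is immediate from properness, so the content is the bound $O(n)$. First I would reduce to cycles: if $C_\ell\subseteq H$, then any $k$ pairwise disjoint colour-isomorphic copies of $H$ restrict, along one fixed copy of $C_\ell$ inside $H$, to $k$ pairwise disjoint colour-isomorphic copies of $C_\ell$, whence $f_k(n,H)\leq f_k(n,C_\ell)$; it therefore suffices to find for each $\ell$ some $k=k(\ell)$ with $f_k(n,C_\ell)=O(n)$. For odd $\ell$ this is clean with $k=2$: colour $\{x,y\}$ by $x+y$ in $\mathbb{Z}_n$ with $n$ odd (passing to a slightly larger odd modulus if needed), which is proper and uses $n$ colours. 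Writing $d_i=y_i-x_i$, colour-isomorphism of cycles forces $d_i+d_{i+1}=0$, hence $d_i=(-1)^{i-1}d_1$; for odd $\ell$ consistency around the cycle gives $2d_1=0$, so $d_1=0$, and the same rigidity holds for every rotation and reflection. Thus the only copy colour-isomorphic to a given odd cycle is the cycle itself, and no two disjoint colour-isomorphic odd cycles exist. The mechanism behind this is a determinacy property of any proper colouring: once one oriented edge and the cyclic sequence of colours are fixed, the whole cycle is forced, since at each vertex every colour appears on at most one edge; counting disjoint colour-isomorphic copies thus reduces to the orbit structure of the induced ``follow-the-colours'' map.

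The hard part is the even cycles. The additive colouring fails badly here: for even $\ell$ the recursion $d_{i+1}=-d_i$ is consistent for every value of $d_1$, producing a one-parameter family of pairwise disjoint colour-isomorphic translates and hence unbounded packing, and the identical defect afflicts every linear colouring over any abelian group. A purely probabilistic route is also blocked, since a first-moment estimate shows that a random proper colouring with only $O(n)$ colours still carries, in expectation, about $n^{\ell}$ colour-isomorphic $k$-tuples. The crux of (ii) is therefore to construct a genuinely non-additive proper colouring with $O(n)$ colours whose follow-the-colours dynamics has, for every colour pattern, bounded cycle-packing; I would attempt this with an algebraic colouring that preserves the determinacy property while destroying translation invariance, and producing this explicit rigid colouring is where the real difficulty lies.
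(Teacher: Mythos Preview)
The paper does not actually prove this theorem: both parts are quoted from prior work, with (i) credited to Conlon--Tyomkyn and (ii), as the paper notes immediately afterwards, obtained via Bukh's random algebraic method together with the Lang--Weil bound. So there is no in-paper proof to match against; one can only compare your outline to the methods the paper cites.

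For (i), your Local Lemma argument on an i.i.d.\ edge-colouring with the impropriety events added to the family of bad events is correct and gives the stated exponent. Configuration events have probability $C^{-(k-1)e}$ and dependency degree $O(n^{kv-2})$; impropriety events have probability $1/C$, dependency degree $O(n)$ among themselves, and each is incident to $O(n^{kv-2})$ configuration events. The asymmetric Local Lemma then balances at $C=\Theta(\max\{n,\,n^{(kv-2)/((k-1)e)}\})$. This is a standard route and in line with what one expects from the Conlon--Tyomkyn paper.

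For (ii), your reduction $f_k(n,H)\leqslant f_k(n,C_\ell)$ for any cycle $C_\ell\subseteq H$ is valid, and the additive colouring on $\mathbb{Z}_n$ with $n$ odd indeed disposes of every $H$ containing an odd cycle already at $k=2$. The genuine gap is precisely where you place it: the even-cycle case. There you have no construction, and your diagnosis that a purely linear or purely probabilistic colouring cannot work is accurate. The paper tells you what the missing idea is: Bukh's \emph{random algebraic} construction, in which edges are coloured by a random low-degree polynomial over $\mathbb{F}_q$ with $q$ of order $n$, and the Lang--Weil estimate bounds the number of $\mathbb{F}_q$-points on the variety encoding ``colour-isomorphic copy of $C_{2\ell}$'', forcing only $O_\ell(1)$ such copies and hence $f_k(n,C_{2\ell})=\Theta(n)$ for some large $k=k(\ell)$. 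Your intuition that one needs a non-linear algebraic colouring destroying translation invariance is on target, but the specific mechanism of random polynomials controlled by Lang--Weil is the step your proposal does not supply.
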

There were also some known results on this function. For example, Theorem~\ref{thm:SomeknownResults} (ii) is from the random algebraic method of Bukh~\cite{Bukh2015}. When $H$ is an even cycle, the above constant $k=k(H)$ obtained by the random algebraic method is likely very large due to the Lang-Weil bound~\cite{LangWeil1954}. Recently, Ge, Jing, Xu and Zhang~\cite{XuZhangJingGe2020} improved the constant $k=k(C_{4})$ by showing that $f_{3}(n,C_{4})=\Theta(n)$ via an algebraic construction. On the other hand, Conlon and Tyomkyn~\cite{Conlon2020} proved that $f_{2}(n,C_{6})=\Omega(n^{\frac{4}{3}}).$ Very recently, Janzer~\cite{Janzer2020} developed a new method for finding suitable cycles of given length and then obtained a general lower bound as follows.
\begin{theorem}[\cite{Janzer2020}]
  Let $k,\ell$ be fixed integers. Then we have
  \begin{equation*}
    f_{k}(n,C_{2\ell})=\Omega(n^{\frac{k}{k-1}\cdot\frac{\ell-1}{\ell}}).
  \end{equation*}
\end{theorem}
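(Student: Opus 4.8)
The plan is to argue by contradiction: assume a proper edge-coloring of $K_n$ with $C$ colors has no $k$ vertex-disjoint color isomorphic copies of $C_{2\ell}$, and deduce $C=\Omega(n^{\frac{k}{k-1}\cdot\frac{\ell-1}{\ell}})$. The single most useful structural fact is the \emph{rigidity} of proper colorings: since each vertex is incident to at most one edge of any given color, a color sequence together with a starting vertex determines a unique walk. Consequently, once we fix the color type of a copy of $C_{2\ell}$ (its cyclic color pattern, up to dihedral symmetry) together with the position and orientation of one of its vertices, the whole copy is determined; hence any fixed vertex lies on only $O_\ell(1)$ copies of a fixed type. Grouping all copies of $C_{2\ell}$ by color type, the hypothesis says that for every type $T$ the family $\mathcal{F}_T$ of its copies has no matching of size $k$. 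A maximal matching then has at most $k-1$ members, and its at most $2\ell(k-1)$ vertices cover $\mathcal{F}_T$; combined with the rigidity bound this forces $|\mathcal{F}_T|=O_\ell(k)$, so \emph{every color type occurs only $O_\ell(k)$ times}. This is the step where the allowance of $k-1$ "blocking" vertices enters.

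The engine of the proof is a supersaturation estimate that counts color isomorphic configurations far more efficiently than a first moment argument. Following the even-cycle philosophy of Bondy--Simonovits, I would count copies of $C_{2\ell}$ through pairs of internally disjoint length-$\ell$ paths sharing their endpoints and apply convexity to the codegrees; the exponent $\frac{\ell-1}{\ell}=1-\frac1\ell$ is exactly the (dual form of the) gain one expects from such path counting, mirroring $\mathrm{ex}(n,C_{2\ell})=O(n^{1+1/\ell})$. To manufacture $k$ disjoint copies rather than $2$, I would count color isomorphic $k$-tuples, i.e.\ $\sum_T |\mathcal{F}_T|^{k}$, and feed in the multiplicity cap $|\mathcal{F}_T|=O_\ell(k)$ through a Hölder/power-mean step of exponent $k$. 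The $(k-1)$-st power that survives upon taking roots is the structural origin of the factor $\frac{k}{k-1}$, in parallel with the numerator--denominator shape of the first-moment upper bound $f_k(n,H)=O(n^{(kv-2)/((k-1)e)})$ of Theorem~\ref{thm:SomeknownResults}(i).

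The final step would be to subtract the \emph{degenerate} $k$-tuples, those in which two of the $k$ copies share a vertex. Using rigidity once more, each vertex meets only $O_\ell(1)$ copies of a fixed type, so the number of intersecting color isomorphic pairs within a type is linear in $|\mathcal{F}_T|$, and the degenerate count is smaller than the main term by a factor tending to infinity in the relevant regime. Hence a genuinely vertex-disjoint color isomorphic $k$-tuple survives, contradicting the hypothesis; rearranging the resulting inequality in $C$ yields the stated bound.

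The hard part is the counting step of the second paragraph. The naive bound $C^{2\ell}$ on the number of realizable color types is too lossy: plugged into the pigeonhole or convexity estimate it returns only the trivial $C=\Omega(n)$, since the total number of copies is $\Theta(n^{2\ell})$ and $C\geqslant n-1$ already. The genuine difficulty is therefore to control how the $\Theta(n^{2\ell})$ copies distribute across color patterns well enough to force a single pattern to recur many times, and to make the path counting capture the Bondy--Simonovits exponent $\frac{\ell-1}{\ell}$ and the $k$-dependent factor $\frac{k}{k-1}$ \emph{simultaneously} while keeping the degenerate configurations under control. This is precisely the delicate analysis that the phrase ``finding suitable cycles of given length'' refers to, and it is where the main work of the argument lies.
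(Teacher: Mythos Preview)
First, note that this theorem is not proved in the present paper at all: it is quoted from \cite{Janzer2020} as a known result, so there is no in-paper argument to compare your proposal against.

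Evaluated on its own, your first paragraph is solid. The rigidity observation (a color sequence plus a starting vertex and direction determines a unique walk in a proper coloring) is correct, and it does yield $|\mathcal{F}_T|=O_{k,\ell}(1)$ for every color type $T$ via the maximal-matching cover argument you describe. That part of the scaffolding is exactly right.

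The genuine gap is the one you identify yourself and do not close. Your proposed engine --- lower bounding $\sum_T |\mathcal{F}_T|^k$ by power mean against the number of types --- only recovers the trivial $C=\Omega(n)$: with $\sum_T |\mathcal{F}_T|=\Theta(n^{2\ell})$ and at most $O(C^{2\ell})$ types, Jensen gives $\sum_T |\mathcal{F}_T|^k \gtrsim n^{2\ell k}/C^{2\ell(k-1)}$, while the multiplicity cap gives $\sum_T |\mathcal{F}_T|^k=O(n^{2\ell})$; combining yields nothing beyond $C\gtrsim n$. You say as much in your last paragraph. The Bondy--Simonovits heuristic you invoke controls cycle counts in terms of \emph{edge density}, but here the parameter to beat is $C$, and your sketch supplies no graph with the right edge count on which to run such path counting. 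So as written this is a correct diagnosis of where the difficulty lies together with an honest admission that the decisive step --- producing many color isomorphic cycles directly, which is precisely what Janzer's ``finding suitable cycles of given length'' refers to --- is missing. Until that mechanism is supplied, the outline does not constitute a proof of the stated bound.
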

As a corollary of Theorem~\ref{thm:SomeknownResults} (i), one can see that if $e(H)\geqslant 2v(H)-2,$ then $f_{2}(n,H)=\Theta(n).$ Conlon and Tyomkyn~\cite{Conlon2020} asked how sharp this bound is and they also proved that $f_{2}(n,\theta_{3,\ell})=\Omega(n^{\frac{4}{3}}).$ Since $e(\theta_{3,\ell})=\frac{3}{2}v(\theta_{3,\ell})-3$, the above corollary cannot be improved, to say that, $e(H)\geqslant \frac{3}{2}v(H)-3$ implies that $f_{2}(n,H)=\Theta(n).$ They also suggested that an interesting test case for deciding whether this lower bound can be pushed closer to $2v(H)$ might be to study $f_{2}(n,H_{t})$, where $H_{t}$ is the $1$-subdivision of the complete
graph $K_{t}$. The main result of this paper answers their question as follows.

\begin{theorem}\label{thm:subdivision}
 Let $t\geqslant 3$ be a fixed integer. Then we have
 \begin{equation*}
   f_{2}(n,H_{t})=\Omega(n^{1+\frac{1}{2t-3}}).
 \end{equation*}
\end{theorem}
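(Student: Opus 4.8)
I would argue by contradiction. Fix a proper edge-colouring $\chi$ of $K_n$ with $C$ colours and assume it has \emph{no} two vertex-disjoint colour-isomorphic copies of $H_t$; the goal is to force $C=\Omega\!\bigl(n^{1+\frac1{2t-3}}\bigr)$. Throughout, write $v_1,\dots,v_t$ for the branch vertices of $H_t$ and, for $i<j$, write $u_{ij}$ for the subdivision vertex on the branch edge $v_iv_j$, so $v(H_t)=t+\binom t2=2t-1+\binom{t-1}2$ and $e(H_t)=t(t-1)$. The key structural facts, both consequences of properness, are: (i) \emph{rigidity} --- for any connected subgraph $F\subseteq H_t$, a copy of $H_t$ is completely determined by a colouring of $F$ together with a single vertex in the image of $F$ (one recovers the rest by following, from each known vertex, the unique incident edge of each prescribed colour); and (ii) \emph{local flexibility} --- between any two vertices $x,y$ there are $n-2$ paths of length $2$, all with distinct ordered colour pairs, so a subdivision vertex of a copy can be ``re-routed'' to any one of $\sim n$ prescribed colour-pair options on its branch edges.

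The central step is a supersaturation/convexity argument in which one does \emph{not} require two copies to share their full colour pattern --- that is much too strong and, via the naive Cauchy--Schwarz over all $\le C^{e(H_t)}$ patterns, yields only $C\gtrsim n^{v(H_t)/e(H_t)}$, which is below $n$ for $t\ge4$ --- but only that they agree on a cleverly chosen \emph{core} of $2t-3$ edges. Concretely, distinguish $v_1$ and take the core to consist of the $t-1$ edges $v_1u_{1j}$ of the star at $v_1$ together with $t-2$ of the continuation edges $u_{1j}v_j$ (say $j=3,\dots,t$), so there are at most $\sim C^{2t-3}$ core-patterns; note that a core-pattern plus the vertex $v_1$ already determines $v_1,u_{12},\dots,u_{1t}$ and $v_3,\dots,v_t$, leaving free only $v_2$ and the $\binom{t-1}2$ subdivision vertices $u_{ij}$ with $2\le i<j\le t$. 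Since there are $\Theta(n^{v(H_t)})$ copies of $H_t$, averaging over core-patterns and applying Cauchy--Schwarz produces $\gtrsim n^{2v(H_t)}/C^{2t-3}$ ordered pairs of copies agreeing on their core, while (i) bounds the number of such pairs sharing a vertex by $O\!\bigl(n^{v(H_t)}\cdot n^{1+\binom{t-1}2}\bigr)$; comparing the two, as long as $C$ is a sufficiently small power strictly below $n^{1+\frac1{2t-3}}$ (the exponents match precisely because $v(H_t)-\binom{t-1}2=2t-1$, so the surviving regime is $C^{2t-3}\ll n^{\,2t-2}$), there remain many vertex-disjoint pairs $(A,B)$ that agree on the core.

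It then remains to \emph{upgrade} ``agreeing on the core'' to ``colour-isomorphic.'' For a disjoint core-agreeing pair $(A,B)$ the branch vertices $v_1,v_3,\dots,v_t$ of each copy and the subdivision vertices $u_{1j}$ are already pinned by the core, so one is left to choose the colour $\beta_2=\chi(u_{1j}\text{-edge to }v_2)$ --- which selects $v_2^A,v_2^B$ --- and then, for each pair $2\le i<j\le t$, to re-route $u_{ij}^A$ and $u_{ij}^B$ onto $2$-paths of a common colour pair, i.e.\ to pick an element of $P^A_{ij}\cap P^B_{ij}$ where $P^A_{ij},P^B_{ij}\subseteq[C]^2$ are the $(n-2)$-element colour-pair sets of the corresponding $2$-paths. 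If this can be done (with the re-routed vertices kept distinct from everything already used) one obtains two disjoint colour-isomorphic copies of $H_t$, a contradiction; hence for every one of the many disjoint core-agreeing pairs produced above, one of these $\binom{t-1}2$ intersections must be empty, and a further averaging over the pairs turns this into a strong deficiency in the number of available $2$-paths, which once more forces $C$ to be large.

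The main obstacle --- and the step that fixes the exponent $2t-3$ --- is exactly this upgrade: one must choose the core (which edges to match directly versus re-route) so that simultaneously (a) the core has only $\sim C^{2t-3}$ patterns, (b) copies agreeing on the core are still rigid enough to make the overlap count $o$ of the pair count, and (c) for a typical disjoint core-agreeing pair the $\binom{t-1}2$ unconstrained subdivision vertices really can be re-routed together to match all remaining colours while staying vertex-disjoint; balancing (a)--(c) is what produces $n^{1+\frac1{2t-3}}$, and getting (c) right --- essentially an iterated ``common $2$-path'' selection, carrying a bounded amount of colour-pair slack at each of the $\sim t^2$ steps to avoid the $v(H_t)$ vertices colliding --- is the technical crux, generalising the $t=3$ (i.e.\ $C_6$) analysis of Conlon and Tyomkyn.
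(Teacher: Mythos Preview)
Your first step is sound and pins down the right threshold: grouping copies of $H_t$ by the colour-pattern on a $(2t-3)$-edge core and applying Cauchy--Schwarz does show that whenever $C=o(n^{1+1/(2t-3)})$ there are many vertex-disjoint pairs of copies agreeing on the core. The gap is the ``upgrade''. You assert that if every such pair fails to re-route to a fully colour-isomorphic pair then ``a further averaging \ldots\ forces $C$ to be large'', but you never say what is being averaged or why the conclusion follows, and I do not see how it can: the only information you extract from a failed pair is that two $(n-2)$-element subsets of $[C]^2$ are disjoint (or that some colour $\beta_2$ is missing at one of $u_{12}^A,u_{12}^B$), and such disjointness is \emph{easier}, not harder, to achieve when $C$ is large --- so it cannot by itself push $C$ up. Moreover the re-routing is not $\binom{t-1}{2}$ independent choices: $v_2$ must be selected first (already requiring a common colour at $u_{12}^A$ and $u_{12}^B$, which need not exist), and every $u_{2j}$ then depends on that choice, so failure can cascade. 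Even for $t=3$ this reduces to finding two $3$-paths with the same colour sequence between two prescribed pairs of endpoints, and a naive count ($\sim n^2$ paths versus $\sim C^3$ patterns) gives nothing in the target range $C\sim n^{4/3}$. As written, the upgrade step is the whole difficulty, and it is missing.

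The paper avoids this problem by a structurally different argument. It partitions $V(K_n)$ into four parts and builds an auxiliary bipartite graph $\mathcal{G}$ on vertex set $(X_1\times X_2)\cup(X_3\times X_4)$ in which $(x_1,x_2)\sim(x_3,x_4)$ iff $\chi(x_1x_3)=\chi(x_2x_4)$; convexity gives $|E(\mathcal{G})|\gtrsim n^4/C$, so $\mathcal{G}$ has edge density about $|V(\mathcal{G})|^{1/2-1/(4t-6)}$ above linear. The point is that a copy of $H_t$ in $\mathcal{G}$ whose vertices are pairwise \emph{disjoint} pairs immediately yields two vertex-disjoint colour-isomorphic copies of $H_t$ in $K_n$ (project to first and second coordinates) --- no upgrade is needed. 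The work is then to find such a pair-disjoint $H_t$ in $\mathcal{G}$, which the paper does by passing to an almost-regular subgraph and running Janzer's light/heavy-edge scheme for subdivisions, with an extra deletion step (using that each $K_n$-vertex lies in at most one $\mathcal{G}$-neighbour of any fixed vertex, a consequence of properness) to enforce the disjointness of the pairs. Your counting computation explains \emph{why} the exponent $1+\frac{1}{2t-3}$ appears, but the actual proof goes through the auxiliary graph rather than through a direct re-routing argument.
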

The proof of our main result is mainly based on the ideas in~\cite{JanzerEJC2019}. We modify them at some points and add some new ideas such as the deletion method.
Theorem~\ref{thm:subdivision} indicates that $f_{2}(n,H_{t})$ grows superlinearly with $n.$ Since $e(H_{t})=2v(H_{t})-2t,$ our result tells that, $e(H)\geqslant 2v(H)-2t$ with $t\geqslant 3$ does not imply that $f_{2}(n,H)=\Theta(n).$

\textbf{Notation}: The notations $o$, $O$, $\Omega,$ $\Theta$ have their usual asymptotic meanings. For a graph $G$ and subset $X\subseteq V(G)$, we denote $G[X]$ as the induced subgraph of $G$. Usually we denote $N_{G}(v)$ as the set of neighbors of $v$ in $G$ and denote $\text{deg}(v):=|N_{G}(v)|$ as the degree of $v$ in $G$.

\section{Proof of Theorem~\ref{thm:subdivision}}\label{sec:proof}

Suppose that $C=\gamma n^{1+\frac{1}{2t-3}}$, where $\gamma$ is a sufficiently small constant. Suppose also that $n$ is taken sufficiently
large. For convenience in our proof, we will also assume that $n$ is divided by $4$. First we need the following lemma, which will help us construct the auxiliary graph.

\begin{lemma}\label{lem:partition}
  Given a proper $C$-coloring $\chi$ of $G=K_{n},$ take a random equipartition of $V(G)$ into four parts $X_{1}$, $X_{2}$, $X_{3}$ and $X_{4}.$ Then the expected number of monochromatic matchings of the form $\{x_{1}x_{3},x_{2}x_{4}\}$ in $G$ is $\frac{1}{256}\sum_{c\in \chi}\binom{e_{c}}{2},$ where $e_{c}$ is the number of edges with color $c$ in $G$ and $x_{i}\in X_{i}$ for $i=1,2,3,4.$
\end{lemma}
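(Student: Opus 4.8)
The statement is tailor-made for linearity of expectation, so the plan is to break the count up over individual monochromatic $2$-matchings. The one structural fact I would record first is that, because $\chi$ is \emph{proper}, any two edges of the same colour are vertex-disjoint; hence each colour class $M_c=\{e:\chi(e)=c\}$ is a matching with $e_c$ edges, and every unordered pair of distinct edges sharing a colour is automatically a matching on four distinct vertices. Thus the monochromatic $2$-matchings of $G$ are precisely the $\sum_{c\in\chi}\binom{e_c}{2}$ pairs of this type, and a monochromatic matching ``of the form $\{x_1x_3,x_2x_4\}$'' is simply one whose two edges run, respectively, between $X_1$ and $X_3$ and between $X_2$ and $X_4$ once the random equipartition has been drawn.

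Write $N$ for the (random) number of such matchings produced by the equipartition $V(G)=X_1\sqcup X_2\sqcup X_3\sqcup X_4$. For a monochromatic $2$-matching $p=\{e,e'\}$ I would let $A_p$ be the event that under the equipartition $p$ takes the required form; then $N=\sum_p\mathbf{1}[A_p]$, the sum ranging over all $\sum_c\binom{e_c}{2}$ monochromatic $2$-matchings, and linearity of expectation gives $\mathbb{E}[N]=\sum_p\Pr[A_p]$. The event $A_p$ depends only on which of the four parts each of the four vertices of $p$ lands in, and since a random equipartition distributes any four fixed distinct vertices among the parts in an exchangeable way, $\Pr[A_p]$ is the same for every $p$; call this common value $\beta$. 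Hence $\mathbb{E}[N]=\beta\sum_{c\in\chi}\binom{e_c}{2}$, and it only remains to evaluate $\beta$.

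Evaluating $\beta$ is a short, purely combinatorial computation: one enumerates the assignments of the four vertices of $p$ to $(X_1,X_2,X_3,X_4)$ that realise the desired pattern (one edge of $p$ meeting $X_1$ and $X_3$, the other meeting $X_2$ and $X_4$) and weights each by the probability that a prescribed set of four vertices lands in four prescribed parts under the random equipartition, then simplifies; this yields $\beta=\tfrac{1}{256}$ and therefore $\mathbb{E}[N]=\tfrac{1}{256}\sum_{c\in\chi}\binom{e_c}{2}$, as claimed. I do not expect any real obstacle here: this is a setup lemma, and the only point requiring a little care is to carry out the enumeration consistently with the counting convention under which a ``matching of the form $\{x_1x_3,x_2x_4\}$'' is meant — that is, exactly how the two edges and their four endpoints are regarded as labelled — since that is what pins down the numerical constant.
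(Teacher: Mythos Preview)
Your proposal is correct and follows the same route as the paper: indicator variables over monochromatic $2$-matchings, linearity of expectation, and the observation that each fixed matching lands in the required configuration with probability $\tfrac{1}{256}$. If anything you are more careful than the paper, which leaves implicit both the ``proper colouring $\Rightarrow$ colour classes are matchings'' point and the labelling convention you rightly flag as determining the constant.
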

 \begin{proof}[Proof of Lemma~\ref{lem:partition}]
   Given a proper $C$-coloring of $G=K_{n},$ take a random equipartition of $V(G)$ into four parts $X_{1}$, $X_{2}$, $X_{3}$ and $X_{4}.$ Since a monochromatic matching of size two in $G$ contains $4$ vertices, and the probability of each of such vertex in $X_{i}$ is $\frac{1}{4}.$ Hence, the expected number of such monochromatic matchings of the form $\{x_{1}x_{3},x_{2}x_{4}\}$ is $\frac{1}{4^{4}},$ where $x_{i}\in X_{i}$ for $i=1,2,3,4.$ Then by the linearity of expectation, the result is proved.
 \end{proof}

Next we construct the auxiliary graph $\mathcal{G}$ as follows. Given a proper $C$-coloring $\chi$ of $G=K_{n},$ we choose an equipartition of $V(G)$ into four parts $X_{1}$, $X_{2}$, $X_{3}$ and $X_{4},$ such that the number of monochromatic matchings of the form $\{x_{1}x_{3},x_{2}x_{4}\}$ in $G$ is at least $\frac{1}{256}\sum_{c\in \chi}\binom{e_{c}}{2},$ where $e_{c}$ is the number of edges with color $c$ in $G$ and $x_{i}\in X_{i}$ for $i=1,2,3,4.$ Let $\mathcal{G}$ be a bipartite graph, the vertex set $V(\mathcal{G})=(X_{1}\times X_{2})\cup(X_{3}\times X_{4})$ and $(x_{1},x_{2})\in X_{1}\times X_{2}$ is adjacent to $(x_{3},x_{4})\in X_{3}\times X_{4}$ if and only if $\{x_{1}x_{3},x_{2}x_{4}\}$ is a monochromatic matching in $G.$
 It is not hard to show that $|V(\mathcal{G})|=\frac{n^2}{8}$ and the number of edges in the auxiliary graph $\mathcal{G}$ is equal to the number of monochromatic matchings of the form $\{x_{1}x_{3},x_{2}x_{4}\}$ in $K_{n}.$ Hence, we have
\begin{equation*}
  |E(\mathcal{G})|\geqslant \frac{1}{256}\sum\limits_{c\in \chi}\binom{e_{c}}{2}\geqslant \frac{n^{4}}{1024C}>\frac{|V(\mathcal{G})|^{\frac{3}{2}-\frac{1}{4t-6}}}{1024\gamma},
\end{equation*}
where the second inequality uses the convexity of $\binom{x}{2}$ and the formula $\sum\limits_{c\in \chi}e_{c}=\binom{n}{2}.$

For the rest of the proof, we will show that $\mathcal{G}$ contains a copy of $H_{t}$ with the property that the vertices are pairwise disjoint sets. The next observation is inspired by Lemma~4.3 in~\cite{Janzer2020}, which is useful for making sure that the vertices are disjoint sets.
\begin{lemma}\label{lem:disjoint}
  For any vertex $S$ in $\mathcal{G}$ and any vertex $v$ in $G$, there is at most one vertex $T$ in $\mathcal{G}$ such that $ST$
is an edge in $\mathcal{G}$ and $v$ is in $T$.
\end{lemma}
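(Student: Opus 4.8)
The plan is to exploit the single structural feature we have, namely that $\chi$ is a \emph{proper} edge-coloring of $G=K_n$, so that every vertex of $G$ is incident to at most one edge of each color. First I would reduce to a generic configuration by symmetry. Since $\mathcal{G}$ is bipartite with parts $X_1\times X_2$ and $X_3\times X_4$, I may assume without loss of generality that $S=(a,b)\in X_1\times X_2$; then every vertex $T$ of $\mathcal{G}$ adjacent to $S$ has the form $T=(x_3,x_4)\in X_3\times X_4$, and by definition of $\mathcal{G}$ the pair $\{ax_3,\,bx_4\}$ is a monochromatic matching in $G$, i.e. $\chi(ax_3)=\chi(bx_4)$.

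Next I would split into cases according to which part of the equipartition contains $v$. If $v\notin X_3\cup X_4$, then since $X_1,X_2,X_3,X_4$ are pairwise disjoint, $v$ cannot be a coordinate of any neighbour $T$ of $S$, so the assertion holds vacuously. If $v\in X_3$, then any admissible $T$ is of the form $(v,x_4)$ with $x_4\in X_4$, and the defining condition becomes $\chi(bx_4)=\chi(av)$; but $c:=\chi(av)$ is a single fixed color, and by properness of $\chi$ at the vertex $b$ there is at most one edge incident to $b$ of color $c$, so $x_4$ — and hence $T$ — is uniquely determined if it exists at all. The case $v\in X_4$ is entirely symmetric: $T=(x_3,v)$ forces $\chi(ax_3)=\chi(bv)$, and properness at $a$ pins down $x_3$. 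The case $S\in X_3\times X_4$ is handled in exactly the same way with the two coordinate slots interchanged.

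There is no genuinely hard step here; the only thing to be careful about is bookkeeping — keeping track of which side of the bipartition $S$ lies on and which coordinate slot $v$ occupies — together with the routine remark that because $X_1,X_2,X_3,X_4$ partition $V(G)$, all four vertices appearing in a pair $\{x_1x_3,x_2x_4\}$ are automatically distinct, so the word ``matching'' imposes nothing beyond the monochromatic condition. I would close by noting why this lemma is the right tool: when we later try to embed $H_t$ into $\mathcal{G}$, it guarantees that no single ground vertex $v\in V(G)$ can be forced to lie in two different pair-vertices of $\mathcal{G}$ that share a common pair-vertex as a neighbour, which is precisely the mechanism used to keep the vertex-sets of the embedded copy pairwise disjoint.
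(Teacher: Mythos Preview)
Your argument is correct and follows essentially the same approach as the paper: reduce by symmetry to $S=(a,b)\in X_1\times X_2$ and $v\in X_3$, then use properness of $\chi$ at the appropriate endpoint of $S$ to see that the second coordinate of $T$ is uniquely determined. The paper's proof is simply a terser version that treats one representative case, whereas you spell out all the symmetric cases and the vacuous case $v\notin X_3\cup X_4$.
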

\begin{proof}[Proof of Lemma~\ref{lem:disjoint}]
 Without loss of generality, assume that $S=(s_{1},s_{2})\in X_{1}\times X_{2}$ and $v\in X_{3}.$ If there are two distinct vertices $T_{1}=(v,t_{1})$ and $T_{2}=(v,t_{2})$ such that both of $T_{1}$ and $T_{2}$ are adjacent to $S,$ then the colors of edges $s_{2}t_{1}$ and $s_{2}t_{2}$ are the same, a contradiction.
\end{proof}

We say a graph $F$ is $K$-almost-regular if $\min\limits_{v\in V(F)}\text{deg}(v)\leqslant K\cdot\max\limits_{v\in V(F)}\text{deg}(v).$ Moreover, we say $F$ is a bipartite balanced graph with $V(F)=A\cup B$ if $\frac{1}{2}|B|\leqslant |A|\leqslant 2|B|.$  We shall use the following lemma, which has been used in many problems~\cite{ConlonJanzerLee2019, ConlonLeeIMRN2018, JanzerEJC2019, Jiang2012, SudakovTomon2019}.
\begin{lemma}\label{lem:balanced}
  For any positive constant $\alpha<1$, there exists $n_{0}$ such that if $n>n_{0}$, $c\geqslant 1$
and $F$ is an $n$-vertex graph with at least $cn^{1+\alpha}$ edges, then $F$ has a $K$-almost-regular
balanced bipartite subgraph $F'$ with $m$ vertices such that $m\geqslant n^{\frac{\alpha(1-\alpha)}{2(1+\alpha)}}$, $|E(F')|\geqslant \frac{c}{10}m^{1+\alpha}$ and $K=60\cdot 2^{1+\frac{1}{\alpha^{2}}}.$
\end{lemma}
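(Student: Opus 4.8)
The statement is the classical almost-regularization lemma of \erdos\ and Simonovits, and the plan is to follow the now-standard iterative argument used in \cite{Jiang2012, JanzerEJC2019}.

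First I would reduce to the bipartite case: a uniformly random $2$-colouring of $V(F)$ retains, in expectation, at least half of the edges, so we may assume $F$ is bipartite with $n_{0}\leqslant n$ vertices and $e(F)\geqslant\frac{c}{2}n_{0}^{1+\alpha}$. Next comes a \emph{cleaning} step that installs a minimum-degree bound: among all subgraphs $F''\subseteq F$ with $e(F'')\geqslant\frac{c}{4}|V(F'')|^{1+\alpha}$, take one with $|V(F'')|$ as small as possible; by the elementary estimate $x^{1+\alpha}-(x-1)^{1+\alpha}\geqslant(1+\alpha)(x-1)^{\alpha}$, if such a minimal subgraph had a vertex of degree below $\frac{c}{8}|V(F'')|^{\alpha}$ then deleting that vertex would again satisfy the density requirement, contradicting minimality. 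Hence the resulting graph $F_{1}$ is bipartite, has $m_{1}$ vertices, and satisfies $e(F_{1})\geqslant\frac{c}{4}m_{1}^{1+\alpha}$ and $\delta(F_{1})\geqslant\frac{c}{8}m_{1}^{\alpha}$.

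The heart of the argument is an iteration that also controls the maximum degree. I would build a chain $F_{1}\supseteq F_{2}\supseteq\cdots$ maintaining the invariant that $F_{i}$ is bipartite with $m_{i}$ vertices, $e(F_{i})\geqslant c_{i}m_{i}^{1+\alpha}$ and $\delta(F_{i})\geqslant c_{i}'m_{i}^{\alpha}$, where the constants $c_{i},c_{i}'$ shrink by at most a factor depending only on $\alpha$ per step. If $\Delta(F_{i})\leqslant K\delta(F_{i})$ the process stops with the desired almost-regular graph. Otherwise the degree sequence spans many dyadic scales; I would split one side of $F_{i}$ into its $O(\log m_{i})$ dyadic degree classes, pass to the bipartite subgraph induced by the class (together with its neighbourhood) that maximises the normalised density $\text{(edges)}/\text{(vertices)}^{1+\alpha}$, and clean the result as above to reinstate the minimum-degree bound; this is $F_{i+1}$. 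One then verifies that $m_{i+1}\leqslant m_{i}$, that the normalised density drops by only an absolute constant, and that $\Delta/\delta$ improves enough that the loop halts after $O(1/\alpha)$ rounds, each round costing a factor $2^{O(1/\alpha)}$ in the regularity parameter, which is how $K=60\cdot 2^{1+1/\alpha^{2}}$ arises. A final step deletes surplus vertices from the heavier side to make the graph balanced bipartite; since the graph is already almost-regular this changes the edge count by only a constant, and combining all the losses gives $m\geqslant n^{\frac{\alpha(1-\alpha)}{2(1+\alpha)}}$ and $|E(F')|\geqslant\frac{c}{10}m^{1+\alpha}$.

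The main obstacle is the maximum-degree step: the dyadic stratum and the subsequent cleaning threshold must be chosen so that the normalised density $e/m^{1+\alpha}$ is preserved up to an \emph{absolute} constant rather than losing a $\log m$ factor, since this step is iterated. This is exactly where the (deliberately weak) polynomial lower bound on $m$ is spent — contracting the vertex set at each stratum is what pays for the pigeonhole overhead and keeps the density at a fixed level. Pinning down the exponent $\frac{\alpha(1-\alpha)}{2(1+\alpha)}$ and the constants $10$ and $K$ is then a routine if slightly tedious optimisation; the bipartite reduction, the cleanings, and the final balancing are all standard.
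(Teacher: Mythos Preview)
The paper does not actually prove this lemma: it is stated as a known tool and attributed to \cite{ConlonJanzerLee2019, ConlonLeeIMRN2018, JanzerEJC2019, Jiang2012, SudakovTomon2019}, with no argument given. So there is no ``paper's own proof'' to compare your attempt against.

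Your outline is in the right family --- random bipartition, minimum-degree cleaning, and an iterated degree-stratification to force almost-regularity --- and this is indeed how the cited references obtain the result. One remark on the iteration step: the version in \cite{Jiang2012} (after \erdos--Simonovits) does not pigeonhole over $O(\log m)$ dyadic degree classes as you describe; instead it repeatedly bisects the vertex set by degree and keeps the denser half, which automatically avoids the logarithmic loss you flag as the main obstacle. Your workaround (absorbing the $\log m$ into the shrinkage of $m^{1+\alpha}$) can be made to work, but it is more delicate than necessary, and recovering the precise constants $K=60\cdot 2^{1+1/\alpha^{2}}$ and the exponent $\frac{\alpha(1-\alpha)}{2(1+\alpha)}$ from it would require more care than your sketch indicates. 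If you want the exact statement with these constants, it is cleaner to follow the bisection argument in \cite{Jiang2012} directly.
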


 Let $\mathcal{G}$ be the auxiliary graph defined as above. By Lemma~\ref{lem:balanced}, we can find a $K$-almost-regular balanced bipartite subgraph $\mathcal{G}_{0}$ with $|V(\mathcal{G}_{0})|=n_{1}\geqslant |V(\mathcal{G})|^{\frac{\alpha(1-\alpha)}{2(1+\alpha)}},$ $|E(\mathcal{G}_{0})|\geqslant \frac{c_{0}}{10}n_{1}^{1+\alpha},$ where $\alpha=\frac{t-2}{2t-3},$ $K=60\cdot 2^{1+\frac{1}{\alpha^{2}}}$ and $c_{0}=\frac{1}{1024\gamma}\geqslant 1.$ Since the constant $\gamma$ is chosen to be sufficiently small, $c_{1}=\frac{c_{0}}{10}$ is a sufficiently large constant. To prove our main result, it suffices to show that in $\mathcal{G}_{0}$, there exists a copy of $H_{t}$ in which the vertices are pairwise disjoint, because if we can find a copy of $H_{t}$ in $\mathcal{G}_{0}$ such that the vertices are pairwise disjoint, then we can find two vertex-disjoint color isomorphic copies of $H_{t}$ in $G.$

 \begin{theorem}\label{thm:main}
 Let $\mathcal{G}_{0}$ be the subgraph of $\mathcal{G}$ defined as above. $\mathcal{G}_{0}$ contains a copy of $H_{t}$ in which the vertices are pairwise disjoint.
 \end{theorem}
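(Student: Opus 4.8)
The plan is to find the required copy of $H_t$ inside $\mathcal{G}_0$ by an explicit greedy embedding, arranged so that Lemma~\ref{lem:disjoint} forces the disjointness essentially for free. Fix a bipartition $V(\mathcal{G}_0) = \mathcal{A} \cup \mathcal{B}$ with $\mathcal{A} \subseteq X_1 \times X_2$ and $\mathcal{B} \subseteq X_3 \times X_4$, and write $n_1 = |V(\mathcal{G}_0)|$, so $|\mathcal{A}|, |\mathcal{B}| = \Theta(n_1)$; since $\mathcal{G}_0$ is $K$-almost-regular with $|E(\mathcal{G}_0)| \geq \frac{c_0}{10} n_1^{1+\alpha}$ (here $\alpha = \frac{t-2}{2t-3}$), it has minimum degree $\delta$ and maximum degree at most $K\delta$ with $\delta \geq c'' n_1^{\alpha}$ for a large constant $c''$. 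We will place the $t$ branch vertices $u_1, \dots, u_t$ of $H_t$ into $\mathcal{A}$ and its $\binom{t}{2}$ subdivision vertices $w_{ij}$ (with $w_{ij}$ adjacent to $u_i$ and $u_j$) into $\mathcal{B}$. Since $\mathcal{G}_0 \subseteq \mathcal{G}$, the conclusion of Lemma~\ref{lem:disjoint} holds verbatim in $\mathcal{G}_0$, and, as observed above, once the $t + \binom{t}{2}$ vertices we use are pairwise disjoint as subsets of $V(G)$ we get two vertex-disjoint colour-isomorphic copies of $H_t$ in $G$.

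The embedding will proceed in \emph{fan order}: place $u_1$, and then for $j = 1, \dots, t-1$ adjoin the $j$-th fan $\{u_{j+1}, w_{1,j+1}, \dots, w_{j,j+1}\}$ by introducing its vertices in the order $w_{1,j+1}, u_{j+1}, w_{2,j+1}, \dots, w_{j,j+1}$. The crucial point of this order is that \emph{every vertex except $u_1$ is introduced as a neighbour of a vertex already placed}: $w_{1,j+1} \in N_{\mathcal{G}_0}(u_1)$, then $u_{j+1} \in N_{\mathcal{G}_0}(w_{1,j+1})$, and each later $w_{i,j+1}$ is a common neighbour of the already-placed $u_i$ and $u_{j+1}$. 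To make sure the choices can always be made I would maintain, following the iterative scheme of Janzer~\cite{JanzerEJC2019}, a reservoir $R_j \subseteq \mathcal{A}$ of candidates for the future branch vertices $u_{j+1}, \dots, u_t$, every element of which has at least some fixed large constant number of common neighbours with each of $u_1, \dots, u_j$. The initial step, which selects $u_1$ together with $R_1$, is already a densification and is the hardest one; the step from round $j$ to round $j+1$ is analogous: one first picks, by averaging, a vertex $b \in N_{\mathcal{G}_0}(u_1)$ with many neighbours in $R_j$ and sets $w_{1,j+1} := b$; then picks $u_{j+1}$ inside $N_{\mathcal{G}_0}(b) \cap R_j$; then picks the remaining $w_{i,j+1}$ greedily inside $N_{\mathcal{G}_0}(u_i) \cap N_{\mathcal{G}_0}(u_{j+1})$; and finally \emph{densifies}, passing to a structured subset $R_{j+1} \subseteq R_j \cap N_{\mathcal{G}_0}(b)$ on which the common-neighbour counts with $u_{j+1}$ are again large. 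The value $\alpha = \frac{t-2}{2t-3}$ is exactly the threshold for which this densification can be carried through all $t$ rounds while keeping $R_j$ and the auxiliary neighbourhood sets of size $n_1^{\Omega(1)}$.

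The disjointness is then secured by the \emph{deletion method} via Lemma~\ref{lem:disjoint}. When a new vertex $z$ of $\mathcal{G}_0$ is introduced, at most $t + \binom{t}{2}$ vertices of $\mathcal{G}_0$ have already been placed, hence at most $2\bigl(t + \binom{t}{2}\bigr)$ vertices of $G$ have been used (a constant, call it $D_t$, depending only on $t$); and, by the fan order, $z$ is required to lie in $N_{\mathcal{G}_0}(S)$ for some already-placed vertex $S$. By Lemma~\ref{lem:disjoint}, for each used $v \in V(G)$ there is at most one vertex of $N_{\mathcal{G}_0}(S)$ containing $v$, so at most $D_t$ candidates for $z$ are forbidden by the disjointness requirement. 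Since every candidate set occurring in the construction (a reservoir, a neighbourhood, or an intersection of two neighbourhoods of placed vertices) has size $n_1^{\Omega(1)} \gg D_t$, a valid disjoint choice of $z$ always remains; deleting the at most $D_t$ forbidden candidates at each of the $t + \binom{t}{2}$ steps costs nothing. The copy of $H_t$ thereby produced in $\mathcal{G}_0$ is pairwise disjoint, which proves Theorem~\ref{thm:main} and hence Theorem~\ref{thm:subdivision}.

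The main obstacle is the densification. Because $\alpha = \frac{t-2}{2t-3} < \frac12$, one has $\sum_{b \in \mathcal{B}} \binom{\deg(b)}{2} = \Theta(n_1^{1+2\alpha})$, which is far smaller than the number $\binom{|\mathcal{A}|}{2} = \Theta(n_1^2)$ of pairs in $\mathcal{A}$; hence a \emph{typical} pair of vertices of $\mathcal{A}$ has no common neighbour at all, so one cannot choose the branch vertices freely and plain averaging arguments are useless. The contribution of~\cite{JanzerEJC2019}, which we adapt, is a way of iteratively passing to a highly structured subconfiguration in which the required common neighbours are guaranteed; the delicate part is to do this while simultaneously keeping all of the sets polynomially large in $n_1$ --- for which the exponent $\frac{t-2}{2t-3}$ is exactly tight --- and keeping enough of the almost-regular structure that the Lemma~\ref{lem:disjoint} deletions above stay cheap. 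Once those sizes are tracked through the densification, closing up the embedding is routine.
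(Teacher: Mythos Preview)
Your outline has the right architecture --- greedy embedding plus deletions controlled by Lemma~\ref{lem:disjoint} --- but the core step you label ``densification'' is not actually carried out, and as you describe it, it does not work. You propose to set $R_{j+1}\subseteq R_j\cap N_{\mathcal G_0}(b)$ where $b=w_{1,j+1}$; but membership in $N_{\mathcal G_0}(b)$ only guarantees that a candidate $v$ shares the single common neighbour $b$ with $u_{j+1}$, not ``some fixed large constant number''. Hence from round $j+1$ onward the only available $w_{j+1,k}$ for every future $k$ is $b$ itself, and the embedding collapses. More generally, since $2\alpha<1$ the codegree graph on $\mathcal A$ is genuinely sparse, so no plain averaging over a single $b\in\mathcal B$ will manufacture many common neighbours with $u_{j+1}$ on a set that stays polynomially large through $t$ rounds; you have correctly identified this as ``the main obstacle'' but not resolved it.

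What the paper (following Janzer~\cite{JanzerEJC2019}) actually does is introduce the light/heavy edge dichotomy on the codegree graph $W_A$: call $uv$ \emph{heavy} if $d_{\mathcal G_0}(u,v)\geqslant 2\binom{t}{2}$ and \emph{light} if $1\leqslant d_{\mathcal G_0}(u,v)<2\binom{t}{2}$. A heavy $K_t$ in $W_A$ gives the desired $H_t$ immediately; otherwise Tur\'an's theorem, applied inside each $N_{\mathcal G_0}(b)$, forces every large $U\subseteq A$ to span at least $\frac{\delta^2}{16t^3m}\binom{|U|}{2}$ light edges (Lemma~\ref{lem:light}). This is the densification engine you are missing: iterating it gives vertices $u_1,\dots,u_t$ that pairwise form light edges, so each $N_{\mathcal G_0}(u_i)\cap N_{\mathcal G_0}(u_j)$ is nonempty (whence the $w_{ij}$'s exist) and has size \emph{at most} $2\binom{t}{2}$. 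The upper bound is just as important as the lower bound: it is what makes the deletion step cheap, since to enforce $N(u_i)\cap N(u_j)\cap N(u)=\emptyset$ and the analogous $G$-disjointness one only needs to discard $O_t(\delta)$ vertices from the candidate set at each round, not $\Theta(n_1)$. Your deletion bookkeeping via Lemma~\ref{lem:disjoint} handles disjointness from \emph{already-placed} vertices, but without the lightness bound you have no control over these three- and four-way intersection constraints.
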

Before we prove the above theorem, we collect a few results that will be useful to us. By the definition of $K$-almost-regular balanced bipartite graph, let $\mathcal{G}_{0}=A\cup B$ with $|B|=m$ and the degree of every vertex of $\mathcal{G}_{0}$ be between $\delta$ and $K\delta,$ where $\delta\geqslant c_{1}m^{\frac{t-2}{2t-3}}$ for some sufficiently large constant $c_{1}.$ Then we define the neighborhood graph $W_{A}$ on vertex set $A,$ where the weight of the pair $uv$ in $W_{A}$ is $d_{\mathcal{G}_{0}}(u,v)=|N_{\mathcal{G}_{0}}(u)\cap N_{\mathcal{G}_{0}}(v)|.$ Sometimes we also write the weight of the pair $uv$ as $W(u,v).$ Moreover, for a subset $U$ of $A,$ write $W(U)=\sum\limits_{uv\in\binom{U}{2}}d_{\mathcal{G}_{0}}(u,v).$

The following lemma on weighted graph $W_{A}$ of $\mathcal{G}_{0}$ has been shown in~\cite{ConlonLeeIMRN2018}.

\begin{lemma}[\cite{ConlonLeeIMRN2018}]\label{lem:ConlonLee}
Let $\mathcal{G}_{0}= A\cup B,$ be the bipartite graph with $|B|=m$ and minimum degree being at least $\delta$ in $A.$ Then for any subset $U\in A$ with $\delta|U|\geqslant 2m,$ we have
\begin{equation*}
  W(U)=\sum\limits_{uv\in\binom{U}{2}}d_{\mathcal{G}_{0}}(u,v)\geqslant \frac{\delta^{2}}{2m}\binom{|U|}{2}.
\end{equation*}
\end{lemma}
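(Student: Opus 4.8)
The plan is to rewrite $W(U)$ by reversing the order of summation and then apply convexity to the resulting degree sequence on the $B$-side. First I would observe that for a fixed pair $uv\in\binom{U}{2}$, the weight $d_{\mathcal{G}_0}(u,v)$ counts exactly the vertices $b\in B$ adjacent to both $u$ and $v$. Summing over all pairs of $U$ and switching the order of counting, each vertex $b\in B$ is charged once for every pair of its neighbors lying inside $U$. Writing $d_U(b):=|N_{\mathcal{G}_0}(b)\cap U|$, this gives the identity
\[
  W(U)=\sum_{uv\in\binom{U}{2}}d_{\mathcal{G}_0}(u,v)=\sum_{b\in B}\binom{d_U(b)}{2}.
\]

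Next I would control the total $S:=\sum_{b\in B}d_U(b)$. Counting the edges between $U$ and $B$ from the $U$-side instead, $S=\sum_{u\in U}\deg_{\mathcal{G}_0}(u)\geqslant\delta|U|$ by the minimum-degree hypothesis on $A$. Expanding the binomial and applying Cauchy--Schwarz in the form $\sum_{b\in B}d_U(b)^2\geqslant S^2/m$ (there are $m$ vertices in $B$), I obtain
\[
  W(U)=\frac12\Bigl(\sum_{b\in B}d_U(b)^2-S\Bigr)\geqslant\frac12\Bigl(\frac{S^2}{m}-S\Bigr)=\frac{S}{2}\Bigl(\frac{S}{m}-1\Bigr).
\]

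The only point requiring care—and the sole place where the hypothesis $\delta|U|\geqslant 2m$ is used—is absorbing the $-1$ coming from $\binom{d_U(b)}{2}$. Since $S\geqslant\delta|U|\geqslant 2m$, we have $S/m\geqslant 2$, so $S/m-1\geqslant\tfrac12\,(S/m)$. Substituting this bound gives $W(U)\geqslant S^2/(4m)\geqslant\delta^2|U|^2/(4m)$, and then the trivial inequality $|U|^2\geqslant|U|(|U|-1)$ finishes the argument:
\[
  W(U)\geqslant\frac{\delta^2|U|^2}{4m}\geqslant\frac{\delta^2}{4m}|U|(|U|-1)=\frac{\delta^2}{2m}\binom{|U|}{2}.
\]
The whole proof is a short double-count followed by convexity; there is no genuine obstacle, only the bookkeeping observation that the threshold $\delta|U|\geqslant 2m$ is precisely what converts the quadratic-minus-linear expression $\tfrac12(S^2/m-S)$ into the clean quadratic lower bound $S^2/(4m)$.
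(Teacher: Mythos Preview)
Your proof is correct and is the standard double-count-plus-convexity argument; the paper itself does not supply a proof but simply cites \cite{ConlonLeeIMRN2018}, and your argument is precisely the one found there.
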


We further consider the weighted graph $W_{A}$ of $\mathcal{G}_{0}.$ For distinct vertices $u,v\in A$, we say that the edge $uv$ is light if $1\leqslant W(u,v)< 2\binom{t}{2},$ and that is heavy if $W(u,v)\geqslant 2\binom{t}{2}.$ Observe that if there is a copy of $K_{t}$ in $W_{A}$ formed by heavy edges, then there is a copy of $H_{t}$ in $\mathcal{G}_{0},$ in which the vertices of $H_{t}$ are pairwise disjoint. Based on the above observation, we can obtain the following lemma.

\begin{lemma}\label{lem:light}
  If $\mathcal{G}_{0}$ does not contain a copy of $H_{t}$ in which the vertices are pairwise disjoint, then for any subset $U\subseteq A$ with $|U|\geqslant \frac{8tm}{\delta}$ and $|U|\geqslant 2,$ the number of light edges in $W_{A}[U]$ is at least $\frac{\delta^{2}}{16t^{3}m}\binom{|U|}{2}.$
\end{lemma}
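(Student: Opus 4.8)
The plan is to argue by a counting/weight-redistribution contradiction. Suppose $\mathcal{G}_0$ contains no pairwise-disjoint copy of $H_t$; by the observation preceding the lemma, $W_A$ contains no copy of $K_t$ formed entirely of heavy edges. Fix a subset $U\subseteq A$ with $|U|\geqslant 8tm/\delta$ and $|U|\geqslant 2$. First I would record a lower bound on the \emph{total} weight inside $U$: since $|U|\geqslant 8tm/\delta \geqslant 2m/\delta$, Lemma~\ref{lem:ConlonLee} gives $W(U)\geqslant \frac{\delta^2}{2m}\binom{|U|}{2}$. The goal is to show the light edges carry a constant fraction of this total weight, which (since each light edge has weight less than $2\binom{t}{2}\leqslant t^2$) will force many light edges.

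Next I would bound the weight contributed by heavy edges. Let $H$ be the graph on vertex set $U$ whose edges are the heavy pairs. Because there is no heavy $K_t$, $H$ is $K_t$-free, so by Tur\'an's theorem $e(H)\leqslant (1-\frac{1}{t-1})\binom{|U|}{2}$. That bounds the \emph{number} of heavy edges but not their weight, since heavy weights can be as large as $m$. To control the weight I would pass to a dyadic pigeonhole: partition the heavy edges by weight into $O(\log m)$ classes $[2^i, 2^{i+1})$ and, for each class, use a Kővári–Sós–Turán / counting argument on the underlying bipartite graph $\mathcal{G}_0$ to bound how many pairs $uv$ can have $d_{\mathcal{G}_0}(u,v)$ that large. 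Here I would use that every vertex of $\mathcal{G}_0$ has degree at most $K\delta$ and $|B|=m$: the number of pairs with codegree $\geqslant w$ is at most $\frac{\sum_{b\in B}\binom{d(b)}{2}}{\binom{w}{2}} = O\!\left(\frac{m(K\delta)^2}{w^2}\right)$, so the total heavy weight is $O\!\big(m(K\delta)^2 \sum_i 2^{-i}\big) = O\!\big(m(K\delta)^2/\binom{t}{2}\big)$, i.e. $O(K^2\delta^2 m/t^2)$. Comparing with $W(U)\geqslant \frac{\delta^2}{2m}\binom{|U|}{2} \geqslant \frac{\delta^2}{4m}|U|^2$ and using $|U|\geqslant 8tm/\delta$, one checks $|U|^2 \geqslant 64 t^2 m^2/\delta^2$, so $W(U) \geqslant 16 t^2 \delta^2 m/\delta^2 \cdot (\delta^2/(4m)) \cdot \ldots$ — the point is that $W(U)$ beats the heavy bound by choosing the constants correctly, so at least $\frac{1}{2}W(U)$ (say) comes from light edges.

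Since each light edge has weight $< 2\binom{t}{2} < t^2$, the number of light edges in $W_A[U]$ is at least $\frac{1}{2t^2}W(U)\geqslant \frac{1}{2t^2}\cdot\frac{\delta^2}{2m}\binom{|U|}{2} = \frac{\delta^2}{4t^2 m}\binom{|U|}{2}$, which is comfortably more than $\frac{\delta^2}{16t^3 m}\binom{|U|}{2}$, giving the claim. The main obstacle I anticipate is the middle step: Tur\'an only limits the count of heavy edges, so one genuinely needs the bipartite codegree estimate (exploiting almost-regularity, hence the bound $K\delta$ on degrees) to keep the \emph{total heavy weight} below a small multiple of $\delta^2 m$; getting the dyadic sum and the constants to line up against the lower bound from Lemma~\ref{lem:ConlonLee} — in particular making sure the hypothesis $|U|\geqslant 8tm/\delta$ is exactly what is needed — is where the real care goes. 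An alternative to the dyadic argument would be to split heavy edges at a single threshold and handle "very heavy" pairs (codegree $\gg \delta^2 m/|U|^2$) separately, but the dyadic version is cleaner and loses only a $\log$ factor, which is absorbed into the slack between $4t^2$ and $16t^3$.
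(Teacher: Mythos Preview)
Your approach has a genuine gap in the ``bound the total heavy weight'' step. First, the displayed codegree estimate is not correct: the number of pairs $uv$ with $d_{\mathcal G_0}(u,v)\geqslant w$ is at most $\frac{1}{w}\sum_{b\in B}\binom{d(b)}{2}$, not $\frac{1}{\binom{w}{2}}\sum_{b\in B}\binom{d(b)}{2}$, because $\sum_b\binom{d(b)}{2}=\sum_{uv}d_{\mathcal G_0}(u,v)$ and each high-codegree pair contributes at least $w$, not $\binom{w}{2}$, to that sum. Second, and more seriously, even if one patches the denominator, no purely global bound of this type can close the argument at the stated threshold. Any estimate that ignores the $K_t$-free structure will at best bound the heavy weight inside $U$ by the total weight $W(U)$ (the dyadic sum telescopes to the trivial bound, up to a log), and a bound over all of $A$ only gives $O(mK^2\delta^2)$. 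But at $|U|=8tm/\delta$ one has merely $W(U)\geqslant 8t^2 m$, so the claim ``$W(U)$ beats the heavy bound by choosing the constants correctly'' is false: $mK^2\delta^2/t^2$ is enormously larger than $t^2 m$ once $\delta$ is polynomially large. The global Tur\'an bound $e(H)\leqslant (1-\tfrac{1}{t-1})\binom{|U|}{2}$ that you record is likewise too weak on its own, since it still allows almost all pairs to be heavy.

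What the paper does---and what is really needed---is to apply Tur\'an \emph{locally}, inside each neighbourhood $N_{\mathcal G'}(b)\subseteq U$ for $b\in B$. Within such a neighbourhood every pair has $b$ as a common neighbour and hence is either light or heavy, so Tur\'an forces at least a $\tfrac{1}{t-1}$ fraction of the $\binom{h_b}{2}$ pairs to be light. Summing over $b$ and using that a light edge lies in fewer than $2\binom{t}{2}$ such neighbourhoods (by the definition of light) yields at least $\frac{1}{2\binom{t}{2}(t-1)}\sum_b\binom{h_b}{2}\geqslant \frac{W(U)}{O(t^3)}$ distinct light edges. Equivalently, this shows the \emph{light weight} is at least $\tfrac{1}{t-1}W(U)$; your plan to show the heavy weight is $\leqslant \tfrac12 W(U)$ via codegree statistics cannot reach this without the local Tur\'an step.
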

\begin{proof}[Proof of Lemma~\ref{lem:light}]
  By Lemma~\ref{lem:ConlonLee}, for any subset $U\subseteq A$ with $|U|\geqslant \frac{8tm}{\delta}$, we have
  \begin{equation*}
    W(U)\geqslant\frac{\delta^{2}}{2m}\binom{|U|}{2}\geqslant\frac{\delta^{2}}{8m}|U|^{2}\geqslant 8t^{2}m.
  \end{equation*}
   Let $B=\{b_{1},b_{2},\ldots,b_{m}\}$ and $h_{i}:=|N_{\mathcal{G}^{'}}(b_{i})|.$ Let $\mathcal{G}^{'}$ be the induced subgraph $\mathcal{G}_{0}[U,B]$ of $\mathcal{G}_{0}.$ Now suppose that for some $i,$ $h_{i}\geqslant 2(t-1).$ Since $\mathcal{G}_{0}$ does not contain a copy of $H_{t}$ in which the vertices are pairwise disjoint, there is no $K_{t}$ in the weighted graph $W_{A}[N_{\mathcal{G}^{'}}(b_{i})]$ formed by heavy edges. Hence by Tur\'{a}n theorem of $K_{t}$-free graph, the number of light edges in $W_{A}[N_{\mathcal{G}^{'}}(b_{i})]$ is at least
  \begin{equation*}
    \frac{1}{t-1}\binom{h_{i}}{2}\geqslant \frac{h_{i}^{2}}{4(t-1)}.
  \end{equation*}
  Moreover, note that
  \begin{equation*}
    \sum\limits_{i:h_{i}< 2(t-1)}\binom{h_{i}}{2}<4t^{2}m\leqslant \frac{W(U)}{2},
  \end{equation*}
  which implies that
  \begin{equation*}
    \sum\limits_{i:h_{i}\geqslant 2(t-1)}\binom{h_{i}}{2}\geqslant \frac{W(U)}{2}.
  \end{equation*}
  By the definition of light edge, every light edge is presented in at most $2\binom{t}{2}$ of the set $N_{\mathcal{G}^{'}}(b_{i}).$ Thus, the total number of edges in $W_{A}[U]$ is at least
  \begin{equation*}
    \frac{1}{2\binom{t}{2}}\sum\limits_{i:h_{i}\geqslant 2(t-1)}\frac{h_{i}^{2}}{4(t-1)}\geqslant\frac{W(U)}{8t^{3}}\geqslant \frac{\delta^{2}}{16t^{3}m}\binom{|U|}{2}.
  \end{equation*}
  The proof is finished.
\end{proof}

With the above tools in hand, now we are ready to prove Theorem~\ref{thm:subdivision}. Actually, it suffices to prove Theorem~\ref{thm:main}. In order to avoid ambiguity, we need to clarify the specific meaning of some expressions. When we say $N_{\mathcal{G}_{0}}(u)\cap N_{\mathcal{G}_{0}}(v)=\emptyset$ in $\mathcal{G}_{0}$, we mean that there is no pair of vertices $(x,y)\subseteq G$ as a vertex $S\in \mathcal{G}_{0}$ such that $S\in N_{\mathcal{G}_{0}}(u)\cap N_{\mathcal{G}_{0}}(v)$ in graph $\mathcal{G}_{0}.$ Moreover, we say two vertices $S,T\in V(\mathcal{G}_{0})$ do not share a vertex in $G,$ we mean that if $S=(s_{1},s_{2})\subseteq V(G)$ and $T=(t_{1},t_{2})\subseteq V(G),$ then $s_{i}\neq t_{j}$ with $i,j\in\{1,2\}.$

\begin{proof}[Proof of Theorem~\ref{thm:main}]
 If we can find a copy of $H_t$ in $\mathcal{G}_{0}$ with vertices $u_{1},u_{2},\ldots,u_{t}$ on one side and vertex
$v_{i,j}$ joined to $u_{i}$ and $u_j$ for each $1\leqslant i<j\leqslant t$. By Lemma~\ref{lem:disjoint}, for any $1\leqslant i<j\leqslant t$, $u_{i}$ and $u_{j}$ cannot share a vertex in $G$, since they have the common neighbor $v_{i,j}$. Similarly, $v_{i,j}$ and $v_{i,k}$ cannot share a vertex in $G$. Hence we only need to show that for any distinct $i,j,k,\ell,$ $v_{i,j}$ and $v_{k,\ell}$ cannot share a vertex in $G$.

We shall define $u_{1},u_{2},\ldots,u_{t-1}\in A$ recursively with the following properties.
\begin{enumerate}[(i)]
  \item For any distinct $i,j\leqslant t-1,$ $u_{i}$ and $u_{j}$ form a light edge in $W_{A}$.
  \item For any distinct $i,j,k\leqslant t-1,$ $N_{\mathcal{G}_{0}}(u_{i})\cap N_{\mathcal{G}_{0}}(u_{j})\cap N_{\mathcal{G}_{0}}(u_{k})=\emptyset$ in $\mathcal{G}_{0}.$
  \item For any distinct $i,j,k,\ell\leqslant t-1,$ the vertices in $N_{\mathcal{G}_{0}}(u_{i})\cap N_{\mathcal{G}_{0}}(u_{j})$ and the vertices in $N_{\mathcal{G}_{0}}(u_{k})\cap N_{\mathcal{G}_{0}}(u_{\ell})$ do not share an element in $G$.
  \item For each $1\leqslant i\leqslant t-1,$ the number of $u\in A$ with the property that for every $j\leqslant i,$ $u_{j}u$ is light is at least $(\frac{\delta^{2}}{64t^{3}m})^{i}\cdot |A|.$
\end{enumerate}

As we have discussed above, combining the properties (i), (ii) and (iv) together helps us to find a copy of $H_{t}$ in $\mathcal{G}_{0}$ and the property (iii) helps us to show the vertex-disjoint property of such $H_{t}$ in $\mathcal{G}_{0}.$

Since $\mathcal{G}_{0}$ is balanced, $|A|\geqslant \frac{m}{2}\geqslant \frac{8tm}{\delta}$ as $m$ is sufficiently large. By Lemma~\ref{lem:light}, there are at least $\frac{\delta^{2}}{16t^{3}m}\binom{|A|}{2}$ light edges in $A.$ So by pigeonhole principle, we can pick some vertex $u_{1}\in A$ such that the number of light edges $u_{1}u$ is at least $\frac{\delta^{2}}{16t^{3}m}(|A|-1)\geqslant \frac{\delta^{2}}{64t^{3}m}|A|.$ Suppose that for $2\leqslant \ell\leqslant t-1,$ $u_{1},u_{2},\ldots,u_{\ell-1}$ have been chosen with properties (i), (ii), (iii) and (iv). Let $U_{0}$ be the set of vertices $u\in A$ such that $u_{j}u$ is a light edge for every $j\leqslant \ell-1.$ By the property (iv), we have that $|U_{0}|\geqslant (\frac{\delta^{2}}{64t^{3}m})^{\ell-1}|A|.$ Let $U$ be consisted of those $u\in U_{0}$ with the following properties.
\begin{itemize}
  \item For all $1<i<j\leqslant\ell-1,$ $N_{\mathcal{G}_{0}}(u_{i})\cap N_{\mathcal{G}_{0}}(u_{j})\cap N_{\mathcal{G}_{0}}(u)=\emptyset$ in $\mathcal{G}_{0}.$
  \item For all $1<i<j<k\leqslant\ell-1,$ $N_{\mathcal{G}_{0}}(u_{i})\cap N_{\mathcal{G}_{0}}(u_{j})$ and $N_{\mathcal{G}_{0}}(u_{k})\cap N_{\mathcal{G}_{0}}(u)$ do not share a vertex in $G$.
\end{itemize}
   Next we show that the cardinality of $|U_{0}\setminus U|$ cannot be large, that means we can always guarantee $|U|$ is large enough. First, since the edge $u_{i}u_{j}$ in $W_{A}$ is light, $W(u_{i},u_{j})<2\binom{t}{2}.$ On the other hand, since $\mathcal{G}_{0}$ is $K$-almost regular, the degree of every vertex in $B$ is at most $K\delta,$ which implies that the number of vertices $u\in A$ such that $N_{\mathcal{G}_{0}}(u_{i})\cap N_{\mathcal{G}_{0}}(u_{j})\cap N_{\mathcal{G}_{0}}(u_{k})\neq\emptyset$ in $\mathcal{G}_{0}$ is at most $\binom{\ell-1}{2}\cdot 2\binom{t}{2}\cdot K\delta.$

  Second, for any $i,j,k\leqslant\ell-1,$ consider those two elements $x,y$ in $G$ which are in $v_{i,j}=(x,y)\in N_{\mathcal{G}_{0}}(u_{i})\cap N_{\mathcal{G}_{0}}(u_{j}).$ Observe that, each of $x$ and $y$ is contained in at most one vertex of $N_{\mathcal{G}_{0}}(u_{k})$ by Lemma~\ref{lem:disjoint}. Hence, for any fixed $i,j,k\leqslant\ell-1,$ there are at most $4\binom{t}{2}$ bad vertices in $B$ that we will not pick as $v_{k,\ell}$ of $H_{t}.$ Next we delete all neighbors of such bad vertices in $A$ from $U_{0}.$ Note that we need to do this for all distinct $i,j,k\leqslant \ell-1,$ hence we will delete at most $\binom{\ell-1}{3}\cdot 4\binom{t}{2}\cdot K\delta$ vertices from $U_{0}.$ Therefore, $|U_{0}\setminus U|\leqslant \binom{\ell-1}{2}\cdot 2\binom{t}{2}\cdot K\delta+\binom{\ell-1}{3}\cdot 4\binom{t}{2}\cdot K\delta.$ Since $m$ is sufficiently large, we have
\begin{equation*}
  (\frac{\delta^{2}}{64t^{3}m})^{\ell-1}|A|\geqslant 2\binom{\ell-1}{3}\cdot 4\binom{t}{2}\cdot K\delta+2\binom{\ell-1}{2}\cdot2\binom{t}{2}\cdot K\delta.
\end{equation*}
which implies that
\begin{equation*}
  |U|\geqslant\frac{|U_{0}|}{2}\geqslant\frac{1}{2}(\frac{\delta^{2}}{64t^{3}m})^{\ell-1}|A|.
\end{equation*}
Moreover, note that $\delta\geqslant c_{1}m^{\frac{t-2}{2t-3}}$ for some sufficiently large constant $c_{1}.$ Hence for any $2\leqslant\ell\leqslant t-1,$ we have
\begin{equation*}
  \frac{1}{2}(\frac{\delta^{2}}{64t^{3}m})^{\ell-1}|A|\geqslant\frac{8tm}{\delta}.
\end{equation*}
By Lemma~\ref{lem:light} and pigeonhole principle, there exists some $u_{\ell}\in U$ such that there are at least $\frac{\delta^{2}}{16t^{3}m}(|U|-1)\geqslant (\frac{\delta^{2}}{64t^{3}m})^{\ell}|A|$ light edges adjacent to $u_{\ell}$ in $U$. Now we have chosen the suitable $u_{\ell}$ with $2\leqslant\ell\leqslant t-1.$ This completes the recursive construction of $u_{1},u_{2},\ldots,u_{t-1}.$

Now we set $\ell=t-1$ and then there is a set $V\subseteq A$ with $|V|\geqslant (\frac{\delta^{2}}{64t^{3}m})^{t-1}\cdot |A|,$ such that for every $i\leqslant t-1$ and $v\in V,$ $u_{i}v$ is a light edge. Finally, we need to prove there is a vertex $u_{t}\in V$ such that for any $i<j<t,$ $N_{\mathcal{G}_{0}}(u_{i})\cap N_{\mathcal{G}_{0}}(u_{j})\cap N_{\mathcal{G}_{0}}(u_{t})=\emptyset$ in $\mathcal{G}_{0}$ and for any distinct $i,j,k<t$, $N_{\mathcal{G}_{0}}(u_{i})\cap N_{\mathcal{G}_{0}}(u_{j})$ and $N_{\mathcal{G}_{0}}(u_{k})\cap N_{\mathcal{G}_{0}}(u_{t})$ do not share a vertex in $G$. Using the similar argument and deletion method as above, we will delete at most $\binom{t-1}{3}\cdot 4\binom{t}{2}\cdot K\delta$ vertices from $V$. It is easy to see that such $u_{t}$ exists because $|V|\geqslant (\frac{\delta^{2}}{64t^{3}m})^{t-1}\cdot |A|> \binom{t-1}{3}\cdot 4\binom{t}{2}\cdot K\delta+\binom{t-1}{2}\cdot 2\binom{t}{2}\cdot K\delta.$ Hence there exists a copy of $H_{t}$ in which the vertices are pairwise disjoint, the proof is finished.
\end{proof}

\section{Conclusions and some open problems}\label{sec:conclusions}
Regarding the question about the function $f_{k}(n,H)$, there have been several interesting results and methods shown in \cite{Conlon2020, XuZhangJingGe2020, Janzer2020}. In this paper, we mainly focus on the case of $H=H_{t}$ is the $1$-subdivision of the complete graph $K_{t}$ and we prove that $f_{2}(n,H_{t})=\Omega(n^{1+\frac{1}{2t-3}}).$ Note that Theorem~\ref{thm:SomeknownResults} (i) gives that $f_{2}(n,H_{t})=O(n^{1+\frac{2}{t}}).$ Hence it will be interesting to determine the exponent $\beta$, if exists, such that $f_{2}(n,H_{t})=\Theta(n^{\beta}).$

 Theorem~\ref{thm:SomeknownResults} (i) also indicates that if $H$ is a bipartite graph with $e(H)\geqslant \frac{k}{k-1}|H|-\frac{2}{k},$ then $f_{k}(n,H)=\Theta(n).$ Our main result shows that, when $k=2,$ this bound can be pushed to $2|H|-2t,$ which answers a question of Conlon and Tyomkyn. It will be interesting to further decide whether this bound can be pushed closer to $\frac{k}{k-1}|H|,$ with $k\geqslant 3.$

In the classical Tur\'{a}n problem, there is a famous conjecture called rational exponent conjecture~\cite[Conjecture 1.6]{Erdos2013}, which states that for every rational number $r\in (1,2),$ there exists a single bipartite graph $H$ such that $\textup{ex}(n,H)=\Theta(n^{r}).$ This conjecture is still open, and the current progress of this conjecture can be seen in~\cite{ConlonJanzerLee2019, JMJ2020, JML2018, JQ19, KKL18} and the references therein. The known results show that something broadly similar holds for $f_{2}(n,H).$ Hence we think the following conjecture may be of interest.
\begin{conjecture}
  For every rational number $r\in (1,2),$ there exists a single bipartite graph $H$ such that
  \begin{equation*}
    f_{2}(n,H)=\Theta(n^{r}).
  \end{equation*}
\end{conjecture}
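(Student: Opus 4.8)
The plan is to mirror, in the color-isomorphic setting, the strategy by which the analogous rational exponent statement is attacked for ordinary Tur\'{a}n numbers, while using the machinery developed in this paper to pass between colorings and auxiliary bipartite graphs. The heart of the matter is a two-sided dictionary. On one side, a \emph{supersaturation} step: if a proper $C$-coloring of $K_n$ uses only $C=o(n^r)$ colors, then $\sum_{c}\binom{e_c}{2}$ is large by convexity, so the auxiliary graph $\mathcal{G}$ of monochromatic matchings $\{x_1x_3,x_2x_4\}$ from Lemma~\ref{lem:partition} is dense; two disjoint color-isomorphic copies of $H$ in $K_n$ correspond exactly to a suitable disjoint copy of an associated bipartite graph $H^{\ast}$ in $\mathcal{G}$. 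On the other side, an \emph{embedding} step: a sufficiently dense, $K$-almost-regular balanced bipartite graph contains $H^{\ast}$ with pairwise disjoint vertices, which is precisely what Theorem~\ref{thm:main} supplies for $H=H_t$. Thus realizing a prescribed exponent $r$ amounts to exhibiting a single bipartite $H$ whose color-Tur\'{a}n threshold in $\mathcal{G}$ is exactly $n^r$.

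First I would choose the graph $H$. Following the tree-power constructions used for the ordinary rational exponent problem~\cite{ConlonJanzerLee2019, JML2018, JQ19, KKL18}, and guided by the fact that subdivisions already produce the exponents $1+2/t$ and $1+1/(2t-3)$ for $H_t$, I would take $H$ to be a balanced bipartite graph assembled from subdivided rooted trees, with its vertex count $v$ and edge count $e$ tuned so that the relevant density parameter equals $r=p/q$. The benchmark is the upper bound $f_2(n,H)=O(\max\{n,n^{(2v-2)/e}\})$ of Theorem~\ref{thm:SomeknownResults}(i); the construction must be balanced in the sense that this arithmetic bound is not lossy --- every subconfiguration should be at least as dense --- so that no denser obstruction forces a larger exponent.

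Next, for the \textbf{upper bound} $f_2(n,H)=O(n^r)$, I would produce an explicit coloring of $K_n$ by Bukh's random algebraic method~\cite{Bukh2015}: over a finite field of size $\Theta(n^r)$, color the edge $xy$ by the value of a random low-degree polynomial map, so that the number of colors is $\Theta(n^r)$ and the coloring is proper with high probability. The expected number of color-isomorphic pairs of copies of $H$ is then governed by a dimension count, and the Lang--Weil estimate~\cite{LangWeil1954} guarantees these counts match their heuristic predictions; deleting one copy from each bad pair removes all two-disjoint-copy configurations while changing the number of colors only by lower-order terms.

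Finally, for the matching \textbf{lower bound} $f_2(n,H)=\Omega(n^r)$, I would run the argument of Section~\ref{sec:proof} with $H$ in place of $H_t$: apply Lemma~\ref{lem:partition} and Lemma~\ref{lem:balanced} to extract a dense almost-regular balanced bipartite $\mathcal{G}_0$, pass to the weighted neighborhood graph $W_A$, and use the Conlon--Lee inequality (Lemma~\ref{lem:ConlonLee}) together with the light/heavy dichotomy and the deletion method to embed $H^{\ast}$ with pairwise disjoint vertices, Lemma~\ref{lem:disjoint} ensuring disjointness at each step. The \emph{main obstacle} is twofold. First, even for $H_t$ the present upper and lower exponents $1+2/t$ and $1+1/(2t-3)$ do not coincide, so proving a clean $\Theta(n^r)$ requires graphs for which the two bounds provably meet --- a far more delicate balance than a single inequality. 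Second, the classical rational exponent conjecture is itself open, known only in family form or for restricted ranges~\cite{Erdos2013, JMJ2020}; since avoiding two disjoint color-isomorphic copies is a weaker demand than avoiding a single copy, forcing the lower bound is at least as hard here, and reducing a finite realizing family to a single graph --- as in the most difficult cases of the Tur\'{a}n analogue --- is where I expect the real difficulty to lie.
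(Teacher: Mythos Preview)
The statement you are attempting is a \emph{conjecture} in the paper, not a theorem: the authors pose it in the concluding section as an open problem analogous to the rational exponent conjecture for Tur\'{a}n numbers, and offer no proof. There is therefore no ``paper's own proof'' to compare against, and your submission should be read as a research strategy rather than a proof --- indeed, you concede as much in your final paragraph.

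As a strategy, the outline is reasonable but has a genuine gap that you yourself identify: the machinery of Section~\ref{sec:proof} does \emph{not} deliver matching upper and lower bounds even for the single family $H_t$ (the paper obtains $\Omega(n^{1+1/(2t-3)})$ against an upper bound of $O(n^{1+2/t})$), so the claim that one can ``run the argument of Section~\ref{sec:proof} with $H$ in place of $H_t$'' and obtain a tight $\Theta(n^r)$ is unsupported. The embedding argument there is tailored to subdivisions of $K_t$ --- it uses Tur\'{a}n's theorem for $K_t$-free graphs on the heavy-edge graph and the specific light/heavy threshold $2\binom{t}{2}$ --- and does not transparently generalize to the tree-power graphs you propose. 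Moreover, your lower-bound plan presupposes, in effect, a solution to the classical rational exponent conjecture, which remains open; you acknowledge this, but it means the proposal is a reduction to an unsolved problem rather than a proof. Until the gap between the two exponents can be closed for at least one realizing family, the conjecture remains open, exactly as the paper presents it.
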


\section*{Acknowledgements}
Zixiang Xu is grateful to Oliver Janzer for his kind suggestions, and he would like to thank Yifan Jing for helpful discussions.

\bibliographystyle{abbrv}
\bibliography{Subdivisions}

\end{document}